\newtheorem{theorem}{Theorem}[section]
\newtheorem{lemma}[theorem]{Lemma}
\newtheorem{definition}[theorem]{Definition}
\theoremstyle{example}
\newtheorem{remark}[theorem]{Remark}
\numberwithin{equation}{section}
\newcommand{\beq}{\begin{equation}}
\newcommand{\eeq}{\end{equation}}
\newcommand{\TT}{\mathbb{T}}
\newcommand{\ZZ}{\mathbb{Z}}
\DeclareMathOperator{\Tr}{Tr}
\renewcommand{\cL}{\mathcal{L}}
\newcommand{\cG}{\mathcal{G}}
\newcommand{\gH}{\mathfrak{H}}
\newcommand{\gI}{\mathfrak{I}}
\newcommand{\ometil}{\widetilde{\Omega}}
\newcommand{\bu}{\bullet}
\newcommand {\be}{\begin{equation}}
\newcommand {\ee}{\end{equation}}
\newcommand{\h}{\begin{eqnarray*}}
\newcommand{\e}{\end{eqnarray*}}
\begin{document}


\title
{Exotic Twisted Equivariant K-Theory}


\author{Fei Han}
\address{Department of Mathematics,
National University of Singapore, Singapore 119076}
\email{mathanf@nus.edu.sg}

 \author{Varghese Mathai}
\address{School of Mathematical Sciences,
University of Adelaide, Adelaide 5005, Australia}
\email{mathai.varghese@adelaide.edu.au}

\subjclass[2010]{Primary 55N91, Secondary 58D15, 58A12, 81T30, 55N20}
\keywords{}
\date{}

\maketitle

\begin{abstract}
In this paper we introduce  {\em exotic twisted $\TT$-equivariant K-theory} of loop space $LZ$ depending on  the (typically non-flat) holonomy line bundle $\cL^B$ on $LZ$ of a gerbe with connection on $Z$. 
We define an exotic twisted $\TT$-equivariant Chern character on the exotic twisted $\TT$-equivariant K-theory of $LZ$ that maps to the exotic twisted $\TT$-equivariant cohomology of $LZ$ as previously defined in \cite{HM15}.
\end{abstract}

\tableofcontents

\section*{Introduction}

In \cite{HM15}, we introduced 
{\em exotic twisted $\TT$-equivariant cohomology} for the loop space $LZ$ of a smooth manifold $Z$
via the invariant differential forms on $LZ$ with coefficients in the (typically non-flat) holonomy line bundle $\cL^B$ of a gerbe with connection, with differential an equivariantly flat superconnection $ \nabla^{\cL^B} -\iota_K + \bar H $ in the sense of \cite{MQ, Q}, where $K$ is the rotation vector field on $LZ$ and $\bar H$ is a degree 3 circle-invariant form on $LZ$ that is completely determined by $H$, the curvature of the gerbe, cf. \cite{HM15}.

This exotic twisted $\TT$-equivariant cohomology theory has two applications. 

Firstly, we introduced in \cite{HM15} the twisted Bismut-Chern character form, generalising \cite{B85}, which is a loop space refinement of the twisted Chern character form in \cite{BCMMS} and represents classes in the completed periodic {exotic twisted $\TT$-equivariant cohomology} $h_\TT^\bullet(LZ, \nabla^{\cL^B}:\bar H)$ of $LZ$. See also 
\cite{FH08,TWZ,S,LMRT, GJP} for other interesting interpretations and extensions of the Bismut-Chern character. 
More precisely, we define these in such a way that the following diagram commutes,
\beq \label{twistedBC_H}
\xymatrix{K^\bullet(Z,H)\ar[dr]_{Ch_H}\ar[rr]^{BCh_H}&&
h_\TT^\bullet(LZ, \nabla^{\cL^B}:\bar H)\ar[dl]^{res}\\
&H^\bu(\Omega(Z)[[u, u^{-1}]], d+u^{-1}H)&}
\eeq
where $res$ is the localisation map, $\mathrm{degree}(u)=2$.

Secondly, in \cite{HM15}  we established a localisation theorem (about the map $res$) for the completed periodic exotic twisted $\TT$-equivariant cohomology for loop spaces and apply it to establish T-duality in a background flux  in type II String Theory from a loop space perspective. Continuing along these lines, we recently used in \cite{HM17} the exotic twisted $\TT$-equivariant cohomology to enhance T-duality on twisted differential forms on circle bundles, where we also showed that T-duality exchanges winding and momentum in a background flux for the first time in the model of \cite{BEM04a, BEM04b}. For an alternate approach to T-duality on loop space using the twisted chiral de Rham cohomology instead, see \cite{LM}. 

There are several approaches in the literature to the K-theory of loop spaces, and we mention two of them here. The first is \cite{Bry90}, who considers Virasoro
equivariant (infinite dimensional) vector bundles $E$ over loopspace such that the restriction to the constant loops $E\Big|_M$ decomposes as a direct sum $\bigoplus_n E_n$ under the action of the infinitismal generator of the rotation group, where each $E_n$ is a finite rank vector bundle and $E_n=0$ for $n<n_0$ for some $n_0$. The second is related to  Chas-Sullivan string topology, cf. \cite{KLW}.

In this paper, we introduce {\em exotic twisted $\TT$-equivariant K-theory}, $K_{\TT}^0(LZ, \nabla^{\cL^B}:\mathcal{G})$, for the loop space $LZ$, where $\mathcal{G}$ is the weak $\TT$-invariant gerbe on $LZ$ whose Dixmier-Douady class is $\bar H$. We also 
define the  {\em exotic twisted $\TT$-equivariant Chern character},
$$
Ch_{\nabla^{\cL^B}:\mathcal{G}}: K_{\TT}^0(LZ, \nabla^{\cL^B}:\mathcal{G}) \longrightarrow h_\TT^{even}(LZ, \nabla^{\cL^B}:\bar H)
$$
that makes the following diagram commute along the solid arrows (see Remark \ref{commute}):
\beq\label{twistedBC_H2}
\xymatrix{& K_{\TT}^0(LZ, \nabla^{\cL^B}:\mathcal{G}) \ar[dl]_{res} \ar[dr]^{Ch_{\nabla^{\cL^B}:\mathcal{G}}} \\
K^\bullet(Z,H)\ar[dr]_{Ch_H} \ar@{-->}[rr]^{BCh_H}&&
h_\TT^\bullet(LZ, \nabla^{\cL^B}:\bar H)\ar[dl]^{res}\\
&H^\bu(\Omega(Z)[[u, u^{-1}]], d+u^{-1}H)&}
\eeq
It follows that  the exotic twisted $\TT$-equivariant K-theory is the correct version of K-theory that corresponds via a Chern character map to the
exotic twisted $\TT$-equivariant cohomology as defined in \cite{HM15}. However we would like to point out that the map $BCh_H$ 
does not make the upper triangle of Diagram (\ref{twistedBC_H2}) commutative (see Remark \ref{noncommute}).

The plan of this paper is as follows.

In Section \ref{1gerbe}, we introduce the  concept of {\em weak $\TT$-invariant gerbes} and study the coupling of them to $\TT$-equivariant line bundles on possibly infinite dimensional good $\TT$-manifolds. A pair consisting of of coupled weak $\TT$-invariant gerbe  and $\TT$-equivariant line bundle will be the initial input data for an exotic twisted $\TT$-equivariant K-theory (see Section \ref{3K-thy}).

In Section \ref{2coh}, we establish the correspondence between the exotic twisted $\TT$-equivariant cohomology, concerning differential forms on $M$ with coefficients in a complex line bundle $\xi$, and certain cohomology theory concerning differential forms on $S\xi$, the circle bundle of $\xi$ over $M$ (see Theorem \ref{fundtheorem}). Such a transition from $M$ to $S\xi$ is crucial: when we attempted to develop the exotic twisted $\TT$-equivariant K-theory, we realized that it is difficult to define it on $M$ itself, instead one needs to work on the circle bundle $S\xi$. The circle bundle is much larger than $M$ and allows us more room to construct the correct K-theory, which possesses a Chern character landing into the exotic twisted $\TT$-equivariant cohomology.

In Section \ref{3K-thy}, we introduce exotic twisted $\TT$-equivariant K-theory for possibly infinite dimensional $\TT$-manifolds,
and the exotic twisted $\TT$-equivariant Chern character that lands into exotic twisted $\TT$-equivariant cohomology. We also establish the transgression formulae in this context, using a new version of Chern-Simons forms. The odd degree analogue of the theory is also established in this section. 

\bigskip

\noindent{\em Acknowledgements.}
The first author was partially supported
by the grant AcRF R-146-000-218-112 from National University
of Singapore. He would also like to thank Professor Weiping Zhang and Dr. Qin Li for helpful discussion. 
The second author was partially supported by funding from the Australian Research Council, through the 
Australian Laureate Fellowship FL170100020. The authors are grateful to the referee for very useful feedback.


\section{Coupling of $\TT$-equivariant line bundles and weak $\TT$-invariant gerbes} \label{1gerbe}
Let $M$ be a (possibly infinite dimensional) $\TT$-manifold. We call $M$ a {\bf good $\TT$-manifold} if $M$ has an open cover $\{U_\alpha\}$ such that all finite intersections $U_{\alpha_0\alpha_1\cdots\alpha_p}=U_{\alpha_0}\cap U_{\alpha_1}\cdots U_{\alpha_p}$ have trivial $\TT$-equivariant homotopy groups, for $j=0$ and $j\ge 2$. Let $K$ be the Killing vector field of the $\TT$-action. Denote by $L_K, \iota_K$ the Lie derivative and  contraction along the direction $K$ respectively.

\begin{definition} \label{gerbe}
The system $(\{U_\alpha\}, H, B_\alpha, A_{\alpha\beta})$ is called a gerbe on $M$, if
$$ 
H \in \Omega^3(M), \ B_\alpha\in \Omega^2(U_\alpha), \ \ A_{\alpha\beta}\in \Omega^1(U_{\alpha\beta}),
$$
such that $\frac{1}{2\pi i}H$ has integral period,
\be  
\begin{split}
&H=dB_\alpha\ \ \mathrm{on}\ U_\alpha,\\
&B_\alpha-B_\beta=dA_{\alpha\beta}\ \ \mathrm{on}\ U_{\alpha\beta},\\
 \end{split}
\ee
and  there exist $C_{\alpha\beta\gamma}\in C^{\infty}(U_{\alpha\beta\gamma}, U(1))$ such that 
$$A_{\alpha\beta}+A_{\beta\gamma}-A_{\alpha\gamma}=d\ln C_{\alpha\beta\gamma}.$$
\end{definition}
It is easy to see that different choices of $C_{\alpha\beta\gamma}$ differ by a $U(1)$-valued constant scalar on each connected component of $U_{\alpha\beta\gamma}$.

\begin{remark} \label{general} Our definition of a gerbe here is slightly more general than the gerbe in the usual sense. We don't require 
$C_{\beta\gamma\delta}C^{-1}_{\alpha\gamma\delta}C_{\alpha\beta\delta}C^{-1}_{\alpha\beta\gamma}=1$ on each nonempty intersection $U_\alpha\cap U_{\beta}\cap U_\gamma \cap U_\delta$. 
\end{remark}

\begin{definition} A gerbe $(\{U_\alpha\}, H, B_\alpha, A_{\alpha\beta})$ is called a {\bf  weak  $\TT$-invariant gerbe} on $M$ if \newline
(i) $H, B_\alpha, A_{\alpha\beta}$ are all $\TT$-invariant;\newline
(ii) $\iota_KA_{\alpha\beta}+\iota_KA_{\alpha\beta}-\iota_KA_{\alpha\gamma}$ takes values in $2\pi i\cdot\ZZ$ on each connected component of $U_{\alpha\beta\gamma}$.
\end{definition}

\begin{remark} The second condition is equivalent to 
$$L_KC_{\alpha\beta\gamma}=2\pi i nC_{\alpha\beta\gamma}$$ for some $n\in \ZZ$ on each connected component of $U_{\alpha\beta\gamma}$. Actually we have 
$$\iota_KA_{\alpha\beta}+\iota_KA_{\alpha\beta}-\iota_KA_{\alpha\gamma}=\iota_K\left( C^{-1}_{\alpha\beta\gamma}dC_{\alpha\beta\gamma}\right)=C^{-1}_{\alpha\beta\gamma} \iota_K dC_{\alpha\beta\gamma}=C^{-1}_{\alpha\beta\gamma}L_KC_{\alpha\beta\gamma}.$$
If all the $n$ is equal to $0$, i.e. $C_{\alpha\beta\gamma}$'s are $\TT$-invariant, we call it a {\bf $\TT$-invariant gerbe.}
\end{remark}

Let $\xi$ be a $\TT$-equivariant complex line bundle over $M$ equipped with a $\TT$-invariant connection $\nabla^\xi$. 

\begin{definition} \label{coupled}
The $\TT$-equivariant line bundle $(\xi, \nabla^{\xi})$ and the weak  $\TT$-invariant gerbe \\$(\{U_\alpha\}, H, B_\alpha, A_{\alpha\beta})$ are  said to be {\bf coupled on $M$} if under some local basis $\{s_\alpha\}$ of $\xi|_{U_\alpha}$, \newline
(i) $-\iota_K B_\alpha$ is the connection 1-form of $\nabla^{\xi}$ on $U_\alpha$ for each $\alpha$;\newline
(ii) $e^{-\iota_K A_{\alpha\beta}}$ is the transition function of $\xi$ on $U_{\alpha\beta}$ for each $\alpha, \beta$.
\end{definition}

\begin{lemma} If the $\TT$-equivariant line bundle $(\xi, \nabla^{\xi})$ and the weak $\TT$-invariant gerbe \\$(\{U_\alpha\}, H, B_\alpha, A_{\alpha\beta})$ are coupled on $M$, then the equivariant super connection $\nabla^{\xi}-u\iota_K+u^{-1}H$ on $\xi$ is equivariantly flat, i.e.
\be (\nabla^{\xi}-u\iota_K+u^{-1}H)^2+uL_K^\xi=0. \ee
\end{lemma}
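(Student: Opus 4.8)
The plan is to verify the identity patchwise, in a $\TT$-invariant local frame $\{s_\alpha\}$ over $U_\alpha$, since every operator appearing in $(\nabla^{\xi}-u\iota_K+u^{-1}H)^2+uL_K^\xi$ is globally defined and two globally defined operators agree as soon as they agree in each local trivialisation. Fix such a frame. By the coupling hypothesis (i) the connection reads $\nabla^{\xi}=d-\iota_K B_\alpha$ on $U_\alpha$; because $s_\alpha$ is $\TT$-invariant the bundle Lie derivative is simply $L_K^\xi=L_K=d\iota_K+\iota_K d$ on the component forms; and $H=dB_\alpha$ on $U_\alpha$ by the gerbe axioms.

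The cleanest route is to conjugate away the $2$-form $B_\alpha$. Writing $\mathbb{A}=\nabla^{\xi}-u\iota_K+u^{-1}H$ and letting $B_\alpha$ act by exterior multiplication, I would check the factorisation
\be
\mathbb{A}=e^{-u^{-1}B_\alpha}\,(d-u\iota_K)\,e^{u^{-1}B_\alpha}
\ee
by expanding the right-hand side: the conjugation of $d$ produces $d+u^{-1}dB_\alpha=d+u^{-1}H$, while the conjugation of $-u\iota_K$ produces $-u\iota_K-\iota_K B_\alpha$, i.e.\ $-u\iota_K$ plus the connection $1$-form, using $\iota_K(B_\alpha^{\wedge k})=k(\iota_K B_\alpha)\wedge B_\alpha^{\wedge(k-1)}$ and that $B_\alpha$ is even. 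Granting this, $\mathbb{A}^2=e^{-u^{-1}B_\alpha}(d-u\iota_K)^2e^{u^{-1}B_\alpha}$, and the standard Cartan-model identity gives $(d-u\iota_K)^2=-u(d\iota_K+\iota_K d)=-uL_K$. Finally, $\TT$-invariance of $B_\alpha$ means $L_K$ commutes with multiplication by $e^{u^{-1}B_\alpha}$, so the conjugation is invisible and $\mathbb{A}^2=-uL_K=-uL_K^\xi$, which is the claim.

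As an independent check (and the route I would actually write out in full), I would expand $\mathbb{A}^2$ directly as a sum of six graded (anti)commutators. Two diagonal terms die immediately: $u^2\iota_K^2=0$, and $u^{-2}H\wedge H=0$ since $H$ is a degree-$3$ (odd) form. The three cross terms I would evaluate by the graded Leibniz rule for $\iota_K$ and for wedging by the odd form $H$: one gets $\{\nabla^{\xi},-u\iota_K\}=-uL_K^\xi$ (here the would-be extra term $\iota_K\theta_\alpha=-\iota_K^2B_\alpha$ vanishes precisely because of coupling (i)), $\{\nabla^{\xi},u^{-1}H\}=u^{-1}dH=0$ since $H$ is closed, and $\{-u\iota_K,u^{-1}H\}=-\iota_K H$. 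Adding these to the curvature term $(\nabla^{\xi})^2=R^{\xi}$ yields $\mathbb{A}^2+uL_K^\xi=R^{\xi}-\iota_K H$. The proof then closes by computing $R^{\xi}=d(-\iota_K B_\alpha)=-d\iota_K B_\alpha=\iota_K dB_\alpha=\iota_K H$, where the middle equality is exactly $\TT$-invariance of $B_\alpha$ (i.e.\ $L_K B_\alpha=0$); hence $R^{\xi}-\iota_K H=0$.

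The main obstacle is bookkeeping rather than conceptual: keeping the graded signs straight when $\iota_K$ and the odd form $H$ are moved past one another and past $\nabla^{\xi}$, and tracking the degree-$2$ scalar $u$ so that each surviving term lands in total degree $2$. I would also record what is \emph{not} needed: the weak $\TT$-invariance (integrality) condition (ii) and the cocycle data $C_{\alpha\beta\gamma}$ play no role in the pointwise flatness identity; they serve only to make the transition functions $e^{-\iota_K A_{\alpha\beta}}$ honest $U(1)$-valued functions and to make $L_K^\xi$ globally well defined. Consequently the two coupling conditions (i) and (ii) guarantee that the patchwise identity glues to the asserted global equation.
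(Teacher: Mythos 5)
Your proof is correct, and it is essentially the argument the paper intends: the paper only says the proof is ``similar to Lemma 1 of [HM15]'', and that argument is exactly your direct expansion, which reduces equivariant flatness to the three identities $\mu_K^\xi=L_K^\xi-[\nabla^\xi,\iota_K]=0$, $(\nabla^\xi)^2=\iota_K H$ and $dH=0$ that the paper itself records later as equation (2.6). Your additional conjugation by $e^{u^{-1}B_\alpha}$ is a clean repackaging of the same computation, and your closing remark correctly identifies that the integrality condition (ii) is only needed for the global consistency of the transition data, not for the pointwise identity.
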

\begin{proof} The proof is similar to the proof of Lemma 1 in \cite{HM15}.

\end{proof}

We provide some examples of coupled $\TT$-equivariant line bundles and weak $\TT$-invariant gerbes.

$\, $

\noindent {\bf Example 1.}  Let $Z$ be a smooth manifold. Let $\{U_\alpha\}$ be a {\em Brylinski open cover} of $Z$, i.e. $\{U_\alpha\}$ is a maximal open cover of $Z$ with the property that $H^i(U_{\alpha_I})=0$ for $i=2, 3$ where $U_{\alpha_I} = \bigcap_{i\in I} U_{\alpha_i}, \,$ $|I|<\infty.$
Then the free loop space $LZ$ is good $\TT$-manifold with the open cover $\{LU_\alpha\}$, where the $\TT$-action is the loop rotating action. 

Let 
\beq\label{eqn:trans}
\tau: \Omega^\bullet(U_{\alpha_I} ) \longrightarrow \Omega^{\bullet-1}(LU_{\alpha_I} )
\eeq
be the transgression map 
\beq
\tau(\xi_I) = \int_{\TT} ev^*(\xi_I), \qquad \xi_I \in \Omega^\bullet(U_{\alpha_I} ).
\eeq
Here $ev$ is the evaluation map
\beq
ev: \TT \times LZ \to Z: (t, \gamma)\to \gamma(t).
\eeq

Let $\omega \in \Omega^i(Z)$. Define $\hat\omega_s \in \Omega^i(LZ)$ for $s\in [0,1]$ by
\beq
\hat\omega_s(X_1, \ldots, X_i)(\gamma) = \omega(X_1\big|_{\gamma(s)}, \ldots, X_i\big|_{\gamma(s)})
\eeq
for $\gamma\in LZ$ and $X_1, \ldots, X_i$ vector fields on $LZ$ defined near $\gamma$. Then one checks that
$d \hat\omega_s = {\widehat{d\omega}}_s$. The $i$-form, averaging $\omega$ on the loop space, 
\beq
\bar\omega=\int_{0}^1  \hat\omega_s ds \in \Omega^i(LZ)
\eeq
is $\TT$-invariant, that is, $L_K\left(\bar\omega\right) = 0$. Moreover $\tau(\omega) = \iota_K\bar\omega$. We call $\bar \omega$ the average of $\omega$.

Let $(\{U_\alpha\}, H, B_\alpha, A_{\alpha\beta})$ be a gerbe on $Z$. Associated to this gerbe, there exists a pair of coupled $\TT$-equivariant line bundle and weak $\TT$-invariant gerbe on $LZ$.

The holonomy of this gerbe is a $\TT$-equivariant line bundle $\cL^B \to LZ$ over the loop space $LZ$, whose construction is 
detailed in Section 6.2.1 in \cite{Bry}. $\cL^B$ has  $\TT$-invariant  local sections $\{\sigma_\alpha\}$ with respect to $\{LU_\alpha\}$ such that the transition functions are $\{e^{-\int_0^1\iota_K A_{\alpha_\beta}} = e^{-\tau(A_{\alpha\beta})}\}$, i.e. $\sigma_\alpha=e^{-\int_0^1\iota_K A_{\alpha_\beta}}\sigma_\beta$.  $\cL^B$ comes with a natural connection, whose definition with respect to the open cover  $\{LU_\alpha\}$  is
\beq
\nabla^{\cL^B} = d-\iota_K \bar B_\alpha=d-\tau(B_\alpha).
\eeq
For more details, cf. 6.2 in \cite{Bry}.

On the other hand, averaging the gerbe $(\{U_\alpha\}, H, B_\alpha, A_{\alpha\beta})$ gives rise to a gerbe  $$(\{LU_\alpha\}, \bar{H}, \bar{B}_\alpha, \bar{A}_{\alpha\beta})$$ on $LZ$. First it is not hard to see that $\frac{1}{2\pi i}\bar{H}$ still has integral periods. It is evident that 
\be  
\begin{split}
&\bar{H}=d\bar{B}_\alpha\ \ \mathrm{on}\ LU_\alpha,\\
&\bar{B}_\alpha-\bar{B}_\beta=d\bar{A}_{\alpha\beta}\ \ \mathrm{on}\ LU_{\alpha\beta}.\\
 \end{split}
\ee
If on $U_{\alpha\beta\gamma}$,
\be \label{third rel} A_{\alpha\beta}+A_{\beta\gamma}-A_{\alpha\gamma}=d\ln C_{\alpha\beta\gamma}, \ee
then 
\be \label{integral} \iota_K\bar{A}_{\alpha\beta}+\iota_K\bar{A}_{\beta\gamma}-\iota_K\bar{A}_{\alpha\gamma}=\tau(d\ln C_{\alpha\beta\gamma})\in2\pi i\ZZ \ee
on each connected component of $LU_{\alpha\beta\gamma}$. By (\ref{integral}), if $x_0$ is a fixed loop in $U_{\alpha\beta\gamma}$ and $x$ is any loop in $U_{\alpha\beta\gamma}$, then 
\be e^{\int_{x_0}^{x} (\bar{A}_{\alpha\beta}+\bar{A}_{\beta\gamma}-\bar{A}_{\alpha\gamma})}\ee
does not depend on the choice of paths from $x_0$ to $x$ in $LU_{\alpha\beta\gamma}$. By (\ref{third rel}), it is not hard to see that $\int_{x_0}^{x} (\bar{A}_{\alpha\beta}+\bar{A}_{\beta\gamma}-\bar{A}_{\alpha\gamma})$ is pure imaginary.
Then we further have
\be \bar{A}_{\alpha\beta}+\bar{A}_{\beta\gamma}-\bar{A}_{\alpha\gamma}=d\ln e^{\int_{x_0}^{x} (\bar{A}_{\alpha\beta}+\bar{A}_{\beta\gamma}-\bar{A}_{\alpha\gamma})},\ee
where $e^{\int_{x_0}^{x} (\bar{A}_{\alpha\beta}+\bar{A}_{\beta\gamma}-\bar{A}_{\alpha\gamma})}$ is an $U(1)$-valued function on $LU_{\alpha\beta\gamma}$. Therefore $(\{LU_\alpha\}, \bar{H}, \bar{B}_\alpha, \bar{A}_{\alpha\beta})$ is a gerbe on $LZ$.

It is obvious that $\bar{H}, \bar{B}_\alpha, \bar{A}_{\alpha\beta}$ are all $\TT$-invariant. Combining (\ref{integral}), we see that the gerbe $(\{LU_\alpha\}, \bar{H}, \bar{B}_\alpha, \bar{A}_{\alpha\beta})$ is a weak $\TT$-invariant gerbe on $LZ$.

As with the  $\TT$-invariant local sections, the local connection 1-form of $\left(\cL^B, \nabla^{\cL^B}\right)$ is 
$$-\tau(B_\alpha)=-\iota_K \bar{B}_\alpha,$$ 
and the transition function of $\cL^B$ is 
$$e^{-\int_0^1\iota_K A_{\alpha_\beta}}=e^{-\iota_K\bar{A}_{\alpha\beta} },$$
we see that $\left(\cL^B, \nabla^{\cL^B}\right)$ and $(\{LU_\alpha\}, \bar{H}, \bar{B}_\alpha, \bar{A}_{\alpha\beta})$ are coupled on $LZ$.

 $\, $

\noindent {\bf Example 2.}  In \cite{BEM04a,BEM04b}, T-duality in a background flux has the following settings. There is a principal circle bundle $\TT \to Z \stackrel{\pi}{\to} X$ with a $\TT$-invariant connection $\Theta$ and a background $\TT$-invariant flux $H$, which is a $\TT$-invariant closed 3-form on $Z$.  Let $\{U_\alpha\}$ be a good cover of $X$. The cover $\{\pi^{-1}(U_\alpha)\}$ makes $Z$ a good $\TT$-manifold. 

The T-dual space $\hat\TT\to\hat Z \stackrel{\hat\pi}{\to} X$ is a principal circle bundle with a $\hat \TT$-invariant connection $\hat\Theta$ and a background $\hat\TT$-invariant flux $\hat H$. The cover $\{\hat\pi^{-1}(U_\alpha)\}$ makes $\hat Z$ a good $\TT$-manifold. 

Denote $v, \hat{v}$ the Killing vector field on $Z, \hat{Z}$ respectively.  The gerbe $(\{\pi^{-1}(U_\alpha)\}, H, B_\alpha,  A_{\alpha\beta})$ on $Z$ and  the gerbe $(\{\hat\pi^{-1}(U_\alpha)\}, \hat H, \hat B_\alpha, \hat A_{\alpha\beta})$  on $\hat Z$ satisfy the following relations
\be e^{-\iota_v A_{\alpha\beta}}=\hat g_{\alpha\beta}, \ -\iota_v {B_\alpha}=\hat \eta_\alpha, \ \iota_v H=F^{\hat\Theta}\ee  and
\be  e^{-\iota_v \hat A_{\alpha\beta}}=g_{\alpha\beta}, \  -\iota_{\hat v} {\hat B_\alpha}=\eta_\alpha, \ \iota_{\hat v} {\hat H}=F^{\Theta}, \ee
where  $\hat g_{\alpha\beta}$ is the transition functions of the bundle $\hat{Z}$, $\hat \eta_\alpha$ is the local connection 1-form of $\hat \Theta$ on $U_\alpha$, $F^{\hat\Theta}$ is the curvature 2-form of $\hat\Theta$ on $X$ and the similar meaning for the notations without hats on the dual side.  

In the setting, $B_\alpha, A_{\alpha\beta}$ are all chosen to be $\TT$-invariant. Moreover as 
$e^{-\iota_v A_{\alpha\beta}}=\hat g_{\alpha\beta}$, we conclude that $\iota_vA_{\alpha\beta}+\iota_vA_{\alpha\beta}-\iota_vA_{\alpha\gamma}$ takes values in $2\pi i\cdot\ZZ$ on each  $U_{\alpha\beta\gamma}$. Therefore $(\{\pi^{-1}(U_\alpha)\}, H, B_\alpha,  A_{\alpha\beta})$ is a weak $\TT$-invariant gerbe on $Z$. Similarly $(\{\hat\pi^{-1}(U_\alpha)\}, \hat H, \hat B_\alpha, \hat A_{\alpha\beta})$ is a weak $\hat\TT$-invariant gerbe on $\hat Z$.

$(\hat Z, \hat\Theta)$ and the standard representation of the circle on complex plane give rise to a complex line bundle with connection $(\hat\xi, \nabla^{\hat \xi})$ on $X$. Dually, there is a similar $(\xi, \nabla^{\xi})$ on $X$ coming from $(Z, \Theta)$. As  
$$e^{-\iota_v A_{\alpha\beta}}=\hat g_{\alpha\beta}, -\iota_v {B_\alpha}=\hat \eta_\alpha,$$
the $\TT$-equivariant line bundle $(\pi^*\hat{\xi}, \pi^*\nabla^{\hat{\xi}})$ and the weal $\TT$-invariant gerbe $(\{\pi^{-1}(U_\alpha)\}, H, B_\alpha, A_{\alpha\beta})$ are coupled on $Z$. Dually, the $\hat\TT$-equivariant line bundle $(\hat{\pi}^*\xi, \hat{\pi}^*\nabla^{\xi})$ and the $\hat\TT$-invariant gerbe $(\{\hat\pi^{-1}(U_\alpha)\}, \hat H, \hat B_\alpha, \hat A_{\alpha\beta})$ are coupled on $\hat{Z}$.

 $\, $


\section{Exotic twisted equivariant cohomology and $U(1)$-bundles}\label{2coh}

Let $M$ be a good $\TT$-manifold. Let $\xi\to M$ be a $\TT$-equivariant Hermitian line bundle over $M$ equipped with a $\TT$-invariant Hermitian connection $\nabla^\xi$. Let $H\in \Omega^3_{cl}(M)$ be a $\TT$-invariant closed 3-form such that the equivariant super connection $\nabla^{\xi}-u\iota_K+u^{-1}H$ is equivariantly flat, i.e.
 \be (\nabla^{\xi}-u\iota_K+u^{-1}H)^2+uL_K^\xi=0,\ee
 where $u$ is a degree 2 indeterminate. 
 
In the previous section, we have seen examples that satisfy these settings. 

Let $\pi: S\xi\to M$ be the principal $U(1)$-bundle of $\xi$. Let $v$ be the vertical tangent vector field on $S\xi$, i.e. the Killing vector field of the $U(1)$-action.

It is clear that $S\xi$ also admits the induced $\TT$-action. As the action of $\TT$ on the fibers of $\xi$ is linear, i.e. $g(\lambda\cdot v)=\lambda \cdot g(v), \forall g\in \TT, \lambda\in U(1)$, one deduces that the $\TT$-action and the $U(1)$-action commute. Therefore we have 
\be [K, v]=0.\ee

The condition $(\nabla^{\xi}-u\iota_K+u^{-1}H)^2+uL_K^\xi=0$ is equivalent to the following three equalities,
\be \label{fundrel}
\left\{
\begin{split}
&\mu^\xi_K=L_K^\xi-[\nabla^\xi, \iota_K]=L^\xi_K-\nabla^\xi_K=0\\
&(\nabla^\xi)^2-\iota_K H=0\\
&dH=0
\end{split}
\right.
\ee

Let $\Theta$ be the connection 1-form on $S\xi$ for $(\xi, \nabla^\xi)$. 
\begin{lemma}
\be \iota_K\Theta=0, \ L_K \Theta=0\ee and
\be d\Theta=\iota_K \pi^*H.\ee
\end{lemma}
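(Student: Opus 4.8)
The plan is to establish the three identities $\iota_K\Theta=0$, $L_K\Theta=0$, and $d\Theta=\iota_K\pi^*H$ by working in a $\TT$-invariant local trivialization and exploiting the coupling relations together with the equivariant flatness conditions in (\ref{fundrel}). First I would fix a local $\TT$-invariant basis $\{s_\alpha\}$ of $\xi$, so that the connection 1-form of $\nabla^\xi$ is $-\iota_K\bar B_\alpha = -\tau(B_\alpha)$ (or, in the setting of Section \ref{2coh}, simply a $\TT$-invariant local connection 1-form $\theta_\alpha$ satisfying $d\theta_\alpha = (\nabla^\xi)^2$ locally). In the associated $U(1)$-bundle coordinates, the global connection 1-form $\Theta$ on $S\xi$ decomposes as $\Theta = d\phi + \pi^*\theta_\alpha$ in the usual way, where $\phi$ is the fiber angular coordinate and $v=\partial_\phi$ is the vertical Killing field.

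The first two identities $\iota_K\Theta=0$ and $L_K\Theta=0$ should follow directly from $\TT$-invariance of the construction. Since the $\TT$-action and the $U(1)$-action commute ($[K,v]=0$) and $\Theta$ is built from $\TT$-invariant data (the connection $\nabla^\xi$ is $\TT$-invariant, and the horizontal distribution it defines is $\TT$-stable), one gets $L_K\Theta=0$ immediately. For $\iota_K\Theta=0$, I would argue that because $K$ is the lift of the $\TT$-action on $M$ to $S\xi$ through the invariant connection, $K$ is horizontal — this is precisely the statement that $\mu^\xi_K = L^\xi_K - \nabla^\xi_K = 0$ from the first line of (\ref{fundrel}), which says the moment map vanishes, i.e. the equivariant structure is compatible with the connection so that the induced vector field $K$ on $S\xi$ has no vertical component. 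Hence $\Theta(K)=\iota_K\Theta=0$.

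The main identity $d\Theta=\iota_K\pi^*H$ is where the real content lies, and this is the step I expect to be the principal obstacle. The standard formula gives $d\Theta = \pi^* F$, where $F=(\nabla^\xi)^2$ is the curvature of $\xi$ pulled back to $S\xi$. The crux is then the second relation in (\ref{fundrel}), namely $(\nabla^\xi)^2 = \iota_K H$, which identifies the curvature with the contraction of the twisting 3-form. Substituting yields $d\Theta = \pi^*\big((\nabla^\xi)^2\big) = \pi^*(\iota_K H) = \iota_K\pi^*H$, where the last equality uses that contraction commutes with the pullback along $\pi$ because $K$ on $S\xi$ is $\pi$-related to $K$ on $M$. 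I would take care to verify this last commutation carefully, since $\iota_K\pi^*H$ is computed on $S\xi$ with the lifted field $K$, and one must check $\pi_*K = K$ (the lift projects to the original Killing field) so that $\pi^*(\iota_K H) = \iota_{K}\pi^* H$ holds as forms on $S\xi$. The delicate point throughout is keeping straight that $K$ denotes simultaneously the field on $M$ and its horizontal lift to $S\xi$, and that the equivariant flatness condition is exactly what converts the curvature into the expected contraction of $H$.
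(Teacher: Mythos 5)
Your proposal is correct and follows essentially the same route as the paper: a $\TT$-invariant local trivialization in which $\Theta$ decomposes as $d\theta+\pi^*\eta_\alpha$, the vanishing moment $\mu^\xi_K=0$ from (\ref{fundrel}) forcing $\iota_K\eta_\alpha=0$ (equivalently, the induced $K$ on $S\xi$ is horizontal) and hence $\iota_K\Theta=0$ and $L_K\Theta=0$, and the curvature relation $(\nabla^\xi)^2=\iota_KH$ combined with $d\Theta=\pi^*\bigl((\nabla^\xi)^2\bigr)$ and $\pi$-relatedness of the two fields $K$ to get $d\Theta=\iota_K\pi^*H$. No substantive differences from the paper's argument.
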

\begin{proof} Let $\{U_\alpha\}$  be a $\TT$-cover of $M$. Choose a $\TT$-invariant local basis $s_\alpha$ of $\xi$ on $U_\alpha$. Let $\eta_\alpha$ be the connection 1-form corresponding to $s_\alpha$. By the first relation in (\ref{fundrel}), we have
$$ 0=\mu^\xi_K(s_\alpha)=(L_K^\xi-[\nabla^\xi, \iota_K])(s_\alpha)=(\iota_K \eta_\alpha)\otimes s_\alpha,$$
and therefore we have
\be \iota_K\eta_\alpha=0. \ee
As $s_\alpha$ is $\TT$-invariant, we get a local $\TT$-equivariant diffeomorphism $\phi_\alpha: U_\alpha\times S^1\to \pi^{-1}(U_\alpha)$ such that on the left hand side, $\TT$ only acts on $U_\alpha$. Then as $\phi_\alpha^*(\Theta)|_{U_\alpha\times S^1}=\eta_\alpha+d\theta$, we deduce that
$$ \iota_K\Theta=0, \ L_K \Theta=0.$$
By the second relation in (\ref{fundrel}), we get
$$ d\Theta+\frac{1}{2}\Theta^2-\iota_K \pi^*H=0$$
or
$$ d\Theta=\iota_K \pi^*H.$$
\end{proof}

Consider the $C^\infty(M)$-module
\be\ometil^*(S\xi):=\{\omega\in \Omega^*(S\xi)|\iota_v\omega=0, L_v\omega=-\omega\}.  \ee
\begin{theorem}
$$\left(\ometil^*(S\xi)^{\TT}[[u, u^{-1}]], d-\iota_V-u\iota_K+\Theta+u^{-1}\pi^*H\right) $$
is a chain complex.

\end{theorem}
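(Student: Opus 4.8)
Write the operator as
\[
D \;=\; d+\Theta-\iota_v-u\,\iota_K+u^{-1}\pi^*H ,
\]
where $\Theta$ and $\pi^*H$ act by wedging. It is odd for the $\Z_2$-grading of $\ometil^*(S\xi)^{\TT}[[u,u^{-1}]]$ given by parity of form degree; note that, because of the contraction $\iota_v$, it is \emph{not} homogeneous of degree $+1$ for the total $\Z$-grading in which $u$ has degree $2$, so the natural statement is that $(\ometil^*(S\xi)^{\TT}[[u,u^{-1}]],D)$ is a $\Z_2$-graded complex. The plan is to verify the two requirements separately: first that $D$ preserves $\ometil^*(S\xi)^{\TT}[[u,u^{-1}]]$, and then that $D^2=0$ on it. Throughout I will use the three identities of the preceding Lemma, $\iota_K\Theta=0$, $L_K\Theta=0$ and $d\Theta=\iota_K\pi^*H$, the relations (\ref{fundrel}) (in particular $dH=0$), the normalisation $\iota_v\Theta=1$ of the connection form together with $L_v\Theta=0$, the verticality facts $\iota_v\pi^*H=0$ and $L_v\pi^*H=0$, and the commutation $[K,v]=0$.

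For the first requirement, fix $\omega\in\ometil^*(S\xi)^{\TT}$, so $\iota_v\omega=0$, $L_v\omega=-\omega$, $L_K\omega=0$. Since $\iota_v\omega=0$, the contraction term drops out and $D\omega=d\omega+\Theta\wedge\omega-u\,\iota_K\omega+u^{-1}\pi^*H\wedge\omega$. Applying $\iota_v$ and using Cartan's formula gives $\iota_v d\omega=L_v\omega-d\iota_v\omega=-\omega$ and $\iota_v(\Theta\wedge\omega)=(\iota_v\Theta)\omega-\Theta\wedge\iota_v\omega=\omega$, while $\iota_v\iota_K\omega=-\iota_K\iota_v\omega=0$ and $\iota_v(\pi^*H\wedge\omega)=0$; hence $\iota_v(D\omega)=-\omega+\omega=0$. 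The weight conditions are easier: every one of the five pieces of $D$ commutes with $L_v$ (using $L_v\Theta=0$, $L_v\pi^*H=0$, $[v,v]=[v,K]=0$) and with $L_K$ (using $L_K\Theta=0$, $L_KH=0$ from $\TT$-invariance, $[K,v]=0$), so $L_v(D\omega)=D(L_v\omega)=-D\omega$ and $L_K(D\omega)=D(L_K\omega)=0$. Thus $D\omega\in\ometil^*(S\xi)^{\TT}$, and compatibility with the formal Laurent variable $u$ is immediate.

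For the second requirement I expand $D^2$ into the squares of, and graded anticommutators $\{X,Y\}=XY+YX$ between, the five homogeneous pieces of $D$. The five squares vanish ($d^2=0$, $\Theta\wedge\Theta=0$, $\iota_v^2=0$, $\iota_K^2=0$, $\pi^*H\wedge\pi^*H=0$). Among the ten cross terms the following vanish: $\{d,-u\iota_K\}=-uL_K$ (zero on $\TT$-invariants), $\{d,u^{-1}\pi^*H\}=u^{-1}\pi^*(dH)=0$, $\{\Theta\wedge,-u\iota_K\}=-u(\iota_K\Theta)=0$, $\{-\iota_v,-u\iota_K\}=u\{\iota_v,\iota_K\}=0$, $\{-\iota_v,u^{-1}\pi^*H\}=-u^{-1}(\iota_v\pi^*H)\wedge=0$, and $\{\Theta\wedge,u^{-1}\pi^*H\}=0$ since the two forms anticommute. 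The surviving four are $\{d,\Theta\wedge\}=(d\Theta)\wedge=(\iota_K\pi^*H)\wedge$ and $\{-u\iota_K,u^{-1}\pi^*H\}=-(\iota_K\pi^*H)\wedge$, which cancel, together with $\{d,-\iota_v\}=-L_v$ and $\{\Theta\wedge,-\iota_v\}=-(\iota_v\Theta)=-\mathrm{id}$. Collecting everything yields the operator identity
\[
D^2 \;=\; -(L_v+\mathrm{id})\qquad\text{on }\TT\text{-invariant forms.}
\]
On $\ometil^*(S\xi)$ one has $L_v=-\mathrm{id}$, so $D^2=0$ there, which proves the statement.

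The conceptual point—and the only place a sign error could sink the argument—is that $D$ is by no means nilpotent on all of $\Omega^*(S\xi)$; rather the two correction terms $-\iota_v$ and $\Theta$ are calibrated precisely so that $D^2$ reduces to the operator $-(L_v+\mathrm{id})$, whose kernel is exactly the defining subspace $\ometil^*(S\xi)$. The cancellation that makes this work is $\{d,\Theta\wedge\}+\{-u\iota_K,u^{-1}\pi^*H\}=0$, which is nothing but the flatness relation $d\Theta=\iota_K\pi^*H$ of the Lemma (itself a repackaging of $(\nabla^\xi)^2=\iota_K H$ and $dH=0$ from (\ref{fundrel})); once this is in hand, the remaining terms $\{d,-\iota_v\}=-L_v$ and $\{\Theta\wedge,-\iota_v\}=-\mathrm{id}$ assemble into the module's constraint automatically. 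So the main labour is simply the careful sign bookkeeping in the graded anticommutators, and no input beyond the Lemma and (\ref{fundrel}) is needed.
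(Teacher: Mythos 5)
Your proof is correct and takes essentially the same route as the paper's: both parts verify that the operator preserves $\ometil^*(S\xi)^{\TT}$ through its (super)commutators with $\iota_v$, $L_v$ and $L_K$, and both reduce $D^2$ to $-uL_K-(L_v+\iota_v\Theta)$ plus the cancelling pair $d\Theta-\iota_K\pi^*H$, which vanishes precisely on the defining subspace. Your explicit remark that the complex is only $\mathbb{Z}_2$-graded, because the $u$-free term $-\iota_v$ breaks $\mathbb{Z}$-homogeneity, is a small but accurate clarification that the paper leaves implicit.
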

\begin{proof}We need to show that:\newline
(i) if $\omega\in \ometil^*(S\xi)^{\TT}$, then 
$$(d-\iota_v-u\iota_K+\Theta+u^{-1}\pi^*H)\omega\in \ometil^*(S\xi)^{\TT};$$
(ii)$$(d-\iota_v-u\iota_K+\Theta+u^{-1}\pi^*H)^2+uL_K=0. $$

(i) holds as we have following three equalities,
\be
\begin{split}
&[d-\iota_v-u\iota_K+\Theta+u^{-1}\pi^*H, \iota_v]\\
=&L_v-[\iota_v, \iota_v]-u\iota_{[K,v]}+\iota_v\Theta+u^{-1}\iota_v(\pi^*H)\\
=&L_v+\iota_v\Theta\\
=&0\ \mathrm{on}\ \ometil^*(S\xi);
\end{split}
\ee

\be
\begin{split}
&[d-\iota_v-u\iota_K+\Theta+u^{-1}\pi^*H, L_v]\\
=&[d, L_v]-\iota_{[v, v]}-u\iota_{[K,v]}+L_v\Theta+u^{-1}L_v(\pi^*H)\\
=&0;
\end{split}
\ee
and 

\be
\begin{split}
&[d-\iota_v-u\iota_K+\Theta+u^{-1}\pi^*H, L_K]\\
=&[d, L_K]-\iota_{[v, K]}-u\iota_{[K,K]}+L_K\Theta+u^{-1}L_K(\pi^*H)\\
=&0.
\end{split}
\ee

To show (ii), we have 
\be
\begin{split}
&(d-\iota_v-u\iota_K+\Theta+u^{-1}\pi^*H)^2\\
=&(d-\iota_v-u\iota_K)^2+(d-\iota_v-u\iota_K)(\Theta+u^{-1}\pi^*H)+(\Theta+u^{-1}\pi^*H)^2\\
=&-L_v-uL_K+d\Theta-\iota_v\Theta-\pi^*\iota_KH\\
=&(-L_v-\iota_v\Theta)+(d\Theta-\iota_K\pi^*H)-uL_K\\
=&-uL_K \ \mathrm{on}\ \ometil^*(S\xi).
\end{split}
\ee

\end{proof}
 
Let $\pi^*\xi$ be the pull back bundle of $\xi$ on $S\xi$. Clearly this is a trivial bundle which has a canonical global nowhere vanishing section
$$ \gamma: (x, y)\to y,  \ \ x\in M, y\in \pi^{-1}(x). $$

Consider the map
\be \label{def-f} f: \Omega^*(M, \xi)\to \Omega^*(S\xi), \ \ \omega\mapsto \gamma^{-1}\cdot \pi^*\omega.\ee

Let $\{U_\alpha\}$ be an $\TT$-cover of $M$. Let $s_\alpha$ be a $\TT$-invariant local basis of the $\xi$ on $U_\alpha$. Suppose $\omega|_{U\alpha}=\omega_\alpha\otimes s_\alpha.$ Then on $\pi^{-1}U_\alpha\cong U_\alpha\times S^1$, 
$$ \pi^*\omega=\pi^*(\omega_\alpha)\otimes \pi^*(s_\alpha), \ \ \gamma=z\cdot \pi^*(s_\alpha), \ \ v=z\, \partial_z,$$
where $z$ is the complex coordinate on $S^1$.  
Therefore on $\pi^{-1}U_\alpha\cong U_\alpha\times S^1$, 
$$\gamma^{-1} \cdot \pi^*\omega=z^{-1}\pi^*(\omega_\alpha)$$
and 
$$\iota_v(\gamma^{-1} \cdot \pi^*\omega)=0, \ \ L_v(\gamma^{-1} \cdot \pi^*\omega)=-\gamma^{-1} \cdot \pi^*\omega. $$

Hence we see that
$$\mathrm{Im}(f)= \ometil^*(S\xi), \ \mathrm{ker}(f)=\{0\}$$
and therefore get an isomorphism of $C^\infty(M)$-modules:
\be f: \Omega^*(M, \xi)\rightarrow  \ometil^*(S\xi). \ee

Since $\gamma$ is a $\TT$-invariant global section of $\pi^*\xi$, we see that $f$ sends $\TT$-invariant invariant parts to $\TT$-invariant invariant parts. Hence we get an isomorphism of $C^\infty(M)$-modules, which we still denote by $f$:
\be \label{fundmap} f: \Omega^*(M, \xi)^{\TT}\rightarrow  \ometil^*(S\xi)^{\TT} .\ee

\begin{theorem} \label{fundtheorem}
\be f:\left(\Omega^*(M, \xi)^{\TT}[[u, u^{-1}]], \nabla^\xi-u\iota_K+u^{-1}H  \right)\rightarrow \left(\ometil^*(S\xi)^{\TT}[[u, u^{-1}]], d-\iota_v-u\iota_K+\Theta+u^{-1}\pi^*H\right)  \ee
is a chain map and induces an isomorphism on cohomology
\be f^*:\  h_{\TT}^{*}(M, \nabla^\xi:H)\rightarrow H^*\left(\ometil^*(S\xi)^{\TT}[[u, u^{-1}]], d-\iota_v-u\iota_K+\Theta+u^{-1}\pi^*H\right),\ee
where $h_{\TT}^{*}(M, \nabla^\xi:H)$ is the {\bf completed periodic exotic twisted $\TT$-equivariant cohomology} \cite{HM15}.
\end{theorem}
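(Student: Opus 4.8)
The plan is to exploit the fact that almost all the work is already done. By the preceding theorem the target $\left(\ometil^*(S\xi)^{\TT}[[u,u^{-1}]],\, d-\iota_v-u\iota_K+\Theta+u^{-1}\pi^*H\right)$ is a chain complex, by the equivariant flatness lemma the source $\left(\Omega^*(M,\xi)^{\TT}[[u,u^{-1}]],\, \nabla^\xi-u\iota_K+u^{-1}H\right)$ is a chain complex, and $f$ has already been shown to be an isomorphism of $C^\infty(M)$-modules in \eqref{fundmap}. Hence the only thing left to verify is that $f$ intertwines the two differentials, i.e.
\be
f\circ\left(\nabla^\xi-u\iota_K+u^{-1}H\right)=\left(d-\iota_v-u\iota_K+\Theta+u^{-1}\pi^*H\right)\circ f.
\ee
Once this identity is in hand, $f$ is a chain isomorphism and the induced map $f_*$ on cohomology is automatically an isomorphism; no acyclicity or spectral-sequence argument is needed.

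To check the intertwining relation I would work in a $\TT$-invariant local trivialization. Fix a $\TT$-invariant basis $s_\alpha$ of $\xi$ over $U_\alpha$ with connection $1$-form $\eta_\alpha$ (so $\iota_K\eta_\alpha=0$ as in the lemma), and write the tautological section as $\gamma=g_\alpha\,\pi^*s_\alpha$ for a $U(1)$-valued fibre function $g_\alpha$ on $\pi^{-1}(U_\alpha)$. Then for $\omega=\omega_\alpha\otimes s_\alpha$ one has $f(\omega)=g_\alpha^{-1}\pi^*\omega_\alpha$, and the one piece of genuine geometric input is the relation
\be
g_\alpha^{-1}dg_\alpha=\Theta-\pi^*\eta_\alpha,\qquad\text{equivalently}\qquad \nabla^{\pi^*\xi}\gamma=\Theta\otimes\gamma,
\ee
which says that the tautological section is covariantly constant up to the connection form $\Theta$. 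Feeding this into $d(g_\alpha^{-1}\pi^*\omega_\alpha)$, the term $dg_\alpha^{-1}=-g_\alpha^{-1}(\Theta-\pi^*\eta_\alpha)$ produces a $-\Theta$ contribution that cancels exactly against the $+\Theta$ in the target differential, leaving $g_\alpha^{-1}\pi^*(d\omega_\alpha+\eta_\alpha\wedge\omega_\alpha)=f(\nabla^\xi\omega)$. This cancellation is the core of the proof: it matches the $(d+\Theta)$-part of the target differential with $\nabla^\xi$ on the source.

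The remaining three terms are routine. The factor $u^{-1}\pi^*H$ matches $u^{-1}H$ because $H$ is a scalar form and $g_\alpha^{-1}$ is a function, so $\pi^*H$ passes through $f$. The term $-u\iota_K$ matches $-u\iota_K$ because $K$ on $S\xi$ is $\pi$-related to $K$ on $M$ (the $\TT$- and $U(1)$-actions commute and $\iota_K\Theta=0$ by the lemma), whence $\iota_K\pi^*\omega_\alpha=\pi^*\iota_K\omega_\alpha$ while $\iota_K g_\alpha^{-1}=0$. Finally the $-\iota_v$ term contributes nothing: since $\iota_v\pi^*\omega_\alpha=0$ and $\iota_v\Theta=1$ one checks directly that $\iota_v f(\omega)=0$ and $L_v f(\omega)=-f(\omega)$ — this is precisely the content of $\mathrm{Im}(f)\subseteq\ometil^*(S\xi)$ — so $-\iota_v$ annihilates the image of $f$, and there is no $\iota_v$ on the source side to match. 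The hard part is therefore purely structural: pinning down the defining relation $\nabla^{\pi^*\xi}\gamma=\Theta\otimes\gamma$ (this is where the geometry of the circle bundle enters, and it is also what forces the normalization $L_v\omega=-\omega$ in the definition of $\ometil^*$) and keeping the Koszul signs straight in the $(d+\Theta)$ computation; once these are secured the intertwining identity falls out term by term, and the cohomology isomorphism follows formally.
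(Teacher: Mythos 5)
Your proposal is correct and follows essentially the same route as the paper: reduce to the local intertwining identity $(d+\Theta)\circ f=f\circ\nabla^\xi$ in a $\TT$-invariant trivialization via the relation $\Theta=\pi^*\eta_\alpha+g_\alpha^{-1}dg_\alpha$, observe that $\iota_v$ kills the image of $f$ while $\iota_K$ and $H$ pass through the pullback, and then conclude the cohomology isomorphism from the fact that $f$ is already a module isomorphism. This is exactly the computation carried out in the paper's proof of formula (\ref{comm}).
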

\begin{proof} Let $\omega\in \Omega^*(M, \xi)^{\TT}[[u, u^{-1}]]$. We have 
\be
\begin{split}
&(d-\iota_v-u\iota_K+\Theta+u^{-1}\pi^*H)(f(\omega))\\
=&(d-\iota_v-u\iota_K+\Theta+u^{-1}\pi^*H)(\gamma^{-1}\cdot \pi^*\omega)\\
=&(d-u\iota_K+\Theta)(\gamma^{-1}\cdot \pi^*\omega)+u^{-1}\pi^*H (\gamma^{-1}\cdot \pi^*\omega).
\end{split}
\ee

Let $\{U_\alpha\}$ be an $\TT$-cover of $M$. Let $s_\alpha$ be a $\TT$-invariant local basis of the $\xi$ on $U_\alpha$. Suppose $\omega|_{U\alpha}=\omega_\alpha\otimes s_\alpha.$

Then 
\be   
\begin{split}
&d(\gamma^{-1}\cdot \pi^*\omega)\\
=&d(\pi^*\omega_\alpha\cdot (\gamma^{-1}\cdot \pi^*s_\alpha))\\
=&\pi^*(d\omega_\alpha)(\gamma^{-1}\cdot \pi^*s_\alpha)-\pi^*(\omega_\alpha)d(\gamma^{-1}\cdot \pi^*s_\alpha).
\end{split}
\ee
Therefore locally, we have
\be   
\begin{split}
&d(\gamma^{-1}\cdot \pi^*\omega)+\Theta(\gamma^{-1}\cdot \pi^*\omega)\\
=&\pi^*(d\omega_\alpha)(\gamma^{-1}\cdot \pi^*s_\alpha)-\pi^*(\omega_\alpha)d(\gamma^{-1}\cdot \pi^*s_\alpha)+\Theta(\pi^*\omega_\alpha)(\gamma^{-1}\cdot \pi^*s_\alpha)\\
=&\pi^*(d\omega_\alpha)(\gamma^{-1}\cdot \pi^*s_\alpha)-\pi^*(\omega_\alpha)(\gamma^{-1}\cdot \pi^*\omega)[\Theta-(\gamma^{-1}\cdot \pi^*\omega)^{-1}d(\gamma^{-1}\cdot \pi^*s_\alpha)]\\
=&[\pi^*(d\omega_\alpha)-\pi^*(\omega_\alpha)\eta_\alpha](\gamma^{-1}\cdot \pi^*s_\alpha),
\end{split}
\ee
where $\eta_\alpha=\Theta-(\gamma^{-1}\cdot \pi^*\omega)^{-1}d(\gamma^{-1}\cdot \pi^*s_\alpha)$ is connection one form for the basis $s_\alpha$ of the connection $\nabla^\xi$ on $U_\alpha.$

Moreover, we have
\be   
\begin{split}
&\iota_K(\gamma^{-1}\cdot \pi^*\omega)\\
=&\iota_K(\pi^*(\omega_\alpha)(\gamma^{-1}\cdot \pi^*s_\alpha))\\
=&\iota_K(\pi^*(\omega_\alpha))(\gamma^{-1}\cdot \pi^*s_\alpha).
\end{split}
\ee

Therefore,
\be   
\begin{split}
&[d(\gamma^{-1}\cdot \pi^*\omega)+\Theta(\gamma^{-1}\cdot \pi^*\omega)+\iota_K(\gamma^{-1}\cdot \pi^*\omega)]|_{U_\alpha}\\
=&\pi^*(d\omega_\alpha+\omega_\alpha\eta_\alpha-u\iota_K\omega_\alpha)(\gamma^{-1}\cdot \pi^*s_\alpha)\\
=&\gamma^{-1}\pi^*[(d\omega_\alpha+\omega_\alpha\eta_\alpha-u\iota_K\omega_\alpha)\otimes s_\alpha ]\\
=&\gamma^{-1}\pi^*[(\nabla^\xi-u\iota_K)\omega]|_{U_\alpha}\\
=&f((\nabla^\xi-u\iota_K)\omega)|_{U_\alpha}.
\end{split}
\ee
And so we have
\be \label{comm}(d-\iota_v-u\iota_K+\Theta+u^{-1}\pi^*H)(f(\omega))=f((\nabla^\xi-u\iota_K+u^{-1}H)\omega).\ee
\end{proof}


\section{Exotic twisted equivariant $K$-theory and the Chern character}\label{3K-thy}

\subsection{Gerbe modules and twisted $K$-theories}
A geometric realization of the gerbe $\cG=(\{U_\alpha\}, H, B_\alpha, A_{\alpha\beta})$ on $M$ is $\{(L_{\alpha\beta}, \nabla^L_{\alpha\beta})\}$, a collection of trivial line bundles $L_{\alpha\beta}\to U_{\alpha\beta}$ with connections $\nabla^L_{\alpha\beta}=d+A_{\alpha\beta}$ such that on $U_{\alpha\beta\gamma}$ there are connection preserving isomorphisms
\be \label{gerbeproperty} L_{\alpha\beta} \otimes L_{\beta\gamma} \cong L_{\alpha\gamma}.\ee  
Note that as here we are using slightly more general version of gerbe (see Definition \ref{gerbe} and Remark \ref{general}), the isomorphisms $L_{\alpha\beta} \otimes L_{\beta\gamma} \cong L_{\alpha\gamma}$ are not uniquely fixed, but may differ by a multiplication by a locally constant $U(1)$-valued scalar.  Then we have
\beq\label{gerbeconn}
(\nabla^L_{\alpha\beta})^2 = F^L_{\alpha\beta} = B_\beta- B_\alpha.
\eeq

Let $E=\{E_{\alpha}\}$
be a collection of (infinite dimensional) Hilbert bundles $E_{\alpha}\to U_{\alpha}$ whose structure group is reduced to
$U_{\gI}$, which are unitary operators on the model Hilbert space $\gH$ of the form identity + trace class operator.
Here $\gI$ denotes the Lie algebra of trace class operators on $\gH$.
In addition, assume that on the overlaps $U_{\alpha\beta}$ that
there are isomorphisms
\beq
\phi_{\alpha\beta}: L_{\alpha\beta} \otimes E_\beta \cong E_\alpha,
\eeq
which are consistently defined on
triple overlaps because of the gerbe property (\ref{gerbeproperty}). More precisely, one has
\beq
(L_{\alpha\beta} \otimes L_{\beta\gamma}) \otimes E_\gamma \cong  L_{\alpha\gamma}\otimes E_\gamma    \cong E_\alpha,
\eeq
and
\beq
L_{\alpha\beta} \otimes (L_{\beta\gamma} \otimes E_\gamma) \cong L_{\alpha\beta} \otimes E_\beta    \cong E_\alpha.
\eeq

Then $\{E_{\alpha}\}$ is said to be a {\em gerbe module} for the gerbe
$\{L_{\alpha\beta}\}$. A {\em gerbe module connection} $\nabla^E$ is a collection of connections $\{\nabla^E_{\alpha}\}$ is of the form $\nabla^E_{\alpha} = d + A_\alpha^E$ where $A_\alpha^E
\in \Omega^1(U_\alpha)\otimes \gI$ whose curvature $F^E_\alpha$ on the overlaps $U_{\alpha\beta}$ satisfies
\beq
\phi_{\alpha\beta}^{-1}(F^{E_\alpha}) \phi_{\alpha\beta} =  F^{L_{\alpha\beta}} I  +    F^{E_\beta}.
\eeq
Using equation \eqref{gerbeconn}, this becomes
\beq
\phi_{\alpha\beta}^{-1}( B_\alpha I + F^E_\alpha ) \phi_{\alpha\beta} = B_{\beta} I  +    F^E_\beta.
\eeq
It follows that $\exp(-B)\Tr\left(\exp(-F^E) - I\right)$ is a globally well defined differential form on $M$
of even degree. Notice that $\Tr(I)=\infty$ which is why we need to consider the subtraction.

Let $E=\{E_{\alpha}\}$ and $E'=\{E'_{\alpha}\}$ 
be a {gerbe modules} for the gerbe $\{L_{\alpha\beta}\}$. Then an element of twisted K-theory $K^0(M, \mathcal{G})$
is represented by the pair $(E, E')$, see \cite{BCMMS}. Two such pairs $(E, E')$ and $(G, G')$ are equivalent
if $E\oplus G' \oplus K \cong E' \oplus G \oplus K$ as gerbe modules for some gerbe module $K$ for the gerbe $\{L_{\alpha\beta}\}$.
We can assume without loss of generality that these gerbe modules $E, E'$ are modeled on the same Hilbert space
$\gH$, after a choice of isomorphism if necessary.

Suppose that $\nabla^E, \nabla^{E'}$ are gerbe module connections on the gerbe modules $E, E'$ respectively. Then we can define the {\em twisted Chern character} as
\begin{align*}
Ch_H &: K^0(M, \mathcal{G}) \to H^{even}(M, H),\\
Ch_H(E, E')&= \exp(-B)\Tr\left(\exp(-F^E) - \exp(-F^{E'})\right).
\end{align*}
That this is a well defined homomorphism is explained in \cite{BCMMS, MS}. To define the twisted Chern character landing in $\left(\Omega^\bu(M)[[u, u^{-1}]]\right)_{(d+u^{-1}H)-cl}$, simply replace the above formula by
$$Ch_H(E, E')= \exp(-u^{-1}B)\Tr\left(\exp(-u^{-1}F^E) - \exp(-u^{-1}F^{E'})\right).$$

The above theory can be extended to equivariant setting with a compact group action on all the data \cite{MS}. †

\subsection{Exotic twisted equivariant $K$-theory} \label{extevenK}

Let $M$ be a good $\TT$-manifold with an $\TT$-invariant cover $\{U_\alpha\}$. Let $\xi\to M$ be a $\TT$-equivariant Hermitian line bundle over $M$ equipped with a $\TT$-invariant Hermitian connection $\nabla^\xi$. Let $\pi: S\xi\to M$ be the principal $U(1)$-bundle of $\xi$. Let $\mathcal{G}=(\{U_\alpha\}, H, B_\alpha, A_{\alpha\beta})$ be a weak $\TT$-invariant gerbe on $M$ and $\{(L_{\alpha\beta}, \nabla^L_{\alpha\beta})\}$ a geometrization of $\mathcal{G}$. Assume that $(\xi, \nabla^\xi)$ and $(\{U_\alpha\}, H, B_\alpha, A_{\alpha\beta})$ are coupled on $M$. Denote this system by $\{M, \mathcal{G}, (\xi, \nabla^\xi)\}$.

Associated to the system $\{M, \mathcal{G}, (\xi, \nabla^\xi)\}$, we will introduce a version of twisted $K$-theory and twisted Chern character in this section.

It is clear that the open cover $\{\pi^{-1}(U_\alpha)\}$ makes  $S\xi$ a good $(\TT\times U(1))$-manifold. Here to distinguish the two circle actions, we denote by $\TT$ the circle acting on the base $M$ and by $U(1)$ the circle acting on the fibers. 

Denote $\mathcal{G}^{\xi}:=(\{\pi^{-1}(U_\alpha)\}, \pi^*H, \pi^*B_\alpha, \pi^*A_{\alpha\beta})$, which is a $(\TT\times U(1))$-invariant gerbe on $S\xi$. Let $\{(\hat L_{\alpha\beta}, \nabla^{\hat L_{\alpha\beta}}=d+\pi^*A_{\alpha\beta})\}$ be the system of  $(\TT\times U(1))$-line bundles with $(\TT\times U(1))$-invariant connections on $U_{\alpha\beta}\times U(1)$, which is the geometrization of the gerbe $\mathcal{G}^\xi$.

Let $v$ be the vertical tangent vector field on $S\xi$, i.e. the Killing vector field of the $U(1)$-action. Let $K$ be the Killing vector field of the $\TT$-action. Let $u$ be a degree 2 indeterminate.

\begin{definition} \label{maindef} $E=\{E_{\alpha}, \nabla^{E_\alpha}\}$ is called
a $(\TT\times U(1))$-equivariant {\bf gerbe module with horizontal connection} for the gerbe $\{\hat L_{\alpha\beta}\}$ if \newline
(a) the $(\TT\times U(1))$-invariant connections $\nabla^{E_\alpha}$'s vanish on the vertical direction, i.e. $\nabla^{E_\alpha}_v\equiv 0$;\newline
(b) there are  $(\TT\times U(1))$-equivariant  isomorphisms
$$ 
\phi_{\alpha\beta}: \hat L_{\alpha\beta} \otimes E_\beta \cong E_\alpha,
$$
that define a gerbe module and which respect the connections. \end{definition}
Note that the isomorphisms $\{\phi_{\alpha\beta}\}$ are consistently defined on
triple overlaps because of the type  (\ref{gerbeproperty})  property of the gerbe  $\{(\hat L_{\alpha\beta}, \nabla^{\hat L_{\alpha\beta}}=d+\pi^*A_{\alpha\beta})\}$. \newline

Let $(E, E')$ and $(G, G')$ be two pairs of $(\TT\times U(1))$-equivariant gerbe modules with horizontal connections for the gerbe $\{\hat L_{\alpha\beta}\}$. We say they are equivalent, denoted by 
$$(E, E')\sim (G, G')$$ if there exists some $K$, a $(\TT\times U(1))$-equivariant gerbe modules with horizontal connection, such that 
$$E\oplus G' \oplus K \cong E' \oplus G \oplus K$$ as  $(\TT\times U(1))$-equivariant gerbe modules with horizontal connections. Clearly this is an equivalence relation. 
As usual, we define \be \hat K_{\TT}^0(M, \nabla^\xi:\mathcal{G}):=\{(E, \nabla^E, E', \nabla^{E'})\}/\{\sim\}. \ee 
If the horizontal gerbe module connections are forgotten, one defines the {\bf exotic twisted $\TT$-equivariant $K$-theory} of $\{M, \mathcal{G}, (\xi, \nabla^\xi)\}$, denoted as $K_{\TT}^0(M, \nabla^\xi:\mathcal{G})$, by
\be K_{\TT}^0(M, \nabla^\xi:\mathcal{G}):=\{(E, E')\}/\{\sim\}. \ee

Let $E=\{E_{\alpha}, \nabla^{E_\alpha}\}$ be 
a $(\TT\times U(1))$-equivariant gerbe module with horizontal connection for the gerbe $\{\hat L_{\alpha\beta}\}$. For the equivariant curvatures along the direction $v+uK$, we have
\beq
\phi_{\alpha\beta}^{-1}(F^{E_\alpha}+\mu_{v+uK}^{E_\alpha}) \phi_{\alpha\beta} =  (F^{{\hat L}_{\alpha\beta}}+\mu_{v+uK}^{{\hat L}_{\alpha\beta}}) I  +    (F^{E_\beta}+\mu_{v+uK}^{E_\beta}),\eeq
where $\mu$ stands for the moment. However
\be F^{{\hat L}_{\alpha\beta}}=\pi^*B_\beta-\pi^*B_\alpha, \ee
\be \mu_{v+uK}^{{\hat L}_{\alpha\beta}}=(\iota_v+u\iota_K)\pi^*A_{\alpha\beta}=u\iota_K  \pi^*A_{\alpha\beta}=2\pi i u\theta_\beta-2\pi i u\theta_\alpha,\ee
where $\theta_\alpha$ (resp. $\theta_\beta$) are the vertical coordinates of $\pi^{-1}(U_\alpha)$ (resp. $\pi^{-1}(U_\beta)$). So we have 
\be \label{glue} \phi_{\alpha\beta}^{-1}(F^{E_\alpha}+\mu_{v+uK}^{E_\alpha}+\pi^*B_\alpha+2\pi i u\theta_\alpha) \phi_{\alpha\beta}=F^{E_\beta}+\mu_{v+uK}^{E_\beta}+\pi^*B_\beta+2\pi i u\theta_\beta.  \ee
Therefore the forms 
$$\exp(-u^{-1}\pi^*B_\alpha-2\pi i \theta_\alpha)\Tr\left(\exp(-u^{-1}(F^{E_\alpha}+\mu_{v+uK}^{E_\alpha})) -I \right)$$ 
can be glued together as a global 
differential form in $\Omega^*(S\xi)[[u, u^{-1}]]$. 
Now let $E'=\{E'_{\alpha}\}$ 
be another $(\TT\times U(1))$-equivariant gerbe module for the gerbe $\{\hat L_{\alpha\beta}\}$. Similar to $E=\{E_{\alpha}, \nabla^{E_\alpha}\}$, the forms 
$$ \exp(-u^{-1}\pi^*B_\alpha-2\pi i \theta_\alpha)\Tr\left(\exp(-u^{-1}(F^{E'_\alpha}+\mu_{v+uK}^{E'_\alpha}))-I\right)$$ can be glued together as a global 
differential form in $\Omega^*(S\xi)[[u, u^{-1}]]$.
Then we see that
\be \label{glue1} \exp(-u^{-1}\pi^*B_\alpha-2\pi i \theta_\alpha)\Tr\left(\exp(-u^{-1}(F^{E_\alpha}+\mu_{v+uK}^{E_\alpha})) -\exp(-u^{-1}(F^{E'_\alpha}+\mu_{v+uK}^{E'_\alpha})) \right)\ee glues to a global 
differential form in $\Omega^*(S\xi)[[u, u^{-1}]]$. Simply denote this form by 
\be ch_{\nabla^\xi:\mathcal{G}}(\nabla^E, \nabla^{E'})=\exp(-u^{-1}\pi^*B-2\pi i \theta)\Tr\left(-\exp(u^{-1}(F^{E}+\mu_{v+uK}^{E})) -\exp(-u^{-1}(F^{E'}+\mu_{v+uK}^{E'})) \right).\ee

\begin{theorem} \label{evenChernMain} (i) The following equalities hold, 
\be \label{iota-Lie}\iota_v ch_{\nabla^\xi:\mathcal{G}}(\nabla^E, \nabla^{E'})=0, \ \ \ L_vch_{\nabla^\xi:\mathcal{G}}(\nabla^E, \nabla^{E'})=-ch_{\nabla^\xi:\mathcal{G}}(\nabla^E, \nabla^{E'}),\ee
 \be (d-\iota_v-u\iota_K+\Theta+u^{-1}\pi^*H)ch_{\nabla^\xi:\mathcal{G}}(\nabla^E, \nabla^{E'})=0.\ee
(ii) If $(\nabla_0^E, \nabla_0^{E'}), (\nabla_1^E, \nabla_1^{E'})$ are two horizontal gerbe module connections, then there exists $cs(\nabla_0^E, \nabla_0^{E'}; \nabla_1^E, \nabla_1^{E'})\in \widetilde{\Omega}^*(S\xi)[[u, u^{-1}]]$ such that 
\be  \label{trans} ch_{\nabla^\xi:\mathcal{G}}(\nabla_1^E, \nabla_1^{E'})-ch_{\nabla^\xi:\mathcal{G}}(\nabla_0^E, \nabla_0^{E'})=(d-\iota_v-u\iota_K+\Theta+u^{-1}\pi^*H)cs(\nabla_0^E, \nabla_0^{E'}; \nabla_1^E, \nabla_1^{E'}). \ee
\end{theorem}
\begin{proof}(i) Consider the local expression
$$ch_{\nabla^\xi:\mathcal{G}}(\nabla^E, \nabla^{E'})|_{\pi^{-1}(U_\alpha)}$$
$$=\exp(-u^{-1}\pi^*B_\alpha-2\pi i \theta_\alpha)\Tr\left(\exp(-u^{-1}(F^{E_\alpha}+\mu_{v+uK}^{E_\alpha})) -\exp(-u^{-1}(F^{E'_\alpha}+\mu_{v+uK}^{E'_\alpha})) \right).$$

Obviously, $\iota_v \pi^*B_\alpha=0.$
On the other hand, as $\nabla^{E_\alpha}$ is horizontal connection, we have $\nabla^{E_\alpha}_v=0$, but this equivalent to 
$$[\nabla^{E_\alpha}, \iota_v]=L_v.$$ Therefore
$$\iota_v(F^{E_\alpha})=[\iota_v, (\nabla^{E_\alpha})^2]=(L_v-\nabla^{E_\alpha}\iota_v)\nabla^{E_\alpha}-\nabla^{E_\alpha}(L_v-\iota_v\nabla^{E_\alpha})=[\nabla^{E_\alpha}, L_v]=0,$$
as $\nabla^{E_\alpha}$ is $\TT\times U(1)$-invariant. Similarly, $\iota_v(F^{E'_\alpha})=0$. We therefore have 
$$\iota_v ch_{\nabla^\xi:\mathcal{G}}(\nabla^E, \nabla^{E'})|_{\pi^{-1}(U_\alpha)}=0.$$ This shows the first equality in (\ref{iota-Lie}).
 
As $\nabla^{E_\alpha}$ is $\TT\times U(1)$-invariant, clearly  $L_v(F^{E_\alpha})=0$. The moment is
$$\mu_{v+uK}^{E_\alpha}=L_{v+uK}-[\iota_{v+uK}, \nabla^{E_\alpha}]. $$
Since $[v, K]=0$, it is easy to see that $$L_v\mu_{v+uK}^{E_\alpha}=0. $$ 
Now  $L_v \pi^*B_\alpha=0$ and $L_ve^{-2\pi i\theta_\alpha}=-e^{2\pi i\theta_\alpha},$ we have 
$$L_vch_{\nabla^\xi:\mathcal{G}}(\nabla^E, \nabla^{E'})|_{\pi^{-1}(U_\alpha)}=-ch_{\nabla^\xi:\mathcal{G}}(\nabla^E, \nabla^{E'})|_{\pi^{-1}(U_\alpha)}.$$ This shows the second equality in (\ref{iota-Lie}).

At last, as $(\xi, \nabla^\xi)$ and $(\{U_\alpha\}, H, B_\alpha, A_{\alpha\beta})$ are coupled on $M$, one has
$$2\pi i\theta_\alpha-\pi^*\iota_K B_\alpha=\Theta|_{\pi^{-1}(U_\alpha)},$$
where $\Theta$ is the connection 1-form on $S\xi$. Hence
\be \label{diff}
\begin{split}
&(d-\iota_v-u\iota_K)ch_{\nabla^\xi:\mathcal{G}}(\nabla^E, \nabla^{E'})|_{\pi^{-1}(U_\alpha)}=\\
=&(d-\iota_v-u\iota_K)\\
&\left[\exp(-u^{-1}\pi^*B_\alpha-2\pi i \theta_\alpha)\Tr\left(\exp(-u^{-1}(F^{E_\alpha}+\mu_{v+uK}^{E_\alpha})) -\exp(-u^{-1}(F^{E'_\alpha}+\mu_{v+uK}^{E'_\alpha})) \right)\right]\\
=&\left[\exp(-u^{-1}\pi^*B_\alpha-2\pi i \theta_\alpha)(-u^{-1}\pi^*dB_\alpha-2\pi id\theta_\alpha+\iota_K\pi^*B_\alpha)\right]\\
&\cdot \Tr\left(-\exp(u^{-1}(F^{E_\alpha}+\mu_{v+uK}^{E_\alpha})) -\exp(-u^{-1}(F^{E'_\alpha}+\mu_{v+uK}^{E'_\alpha})) \right)\\
=&\left[-u^{-1}\pi^*H-(2\pi i\theta_\alpha-\pi^*\iota_K B_\alpha))\right]\\
&\cdot \left[\exp(-u^{-1}\pi^*B_\alpha-2\pi i d\theta_\alpha)\Tr\left(-\exp(u^{-1}(F^{E_\alpha}+\mu_{v+uK}^{E_\alpha})) -\exp(-u^{-1}(F^{E'_\alpha}+\mu_{v+uK}^{E'_\alpha})) \right)\right]\\
=&\left(-u^{-1}\pi^*H-\Theta \right)ch_{\nabla^\xi:\mathcal{G}}(\nabla^E, \nabla^{E'})|_{\pi^{-1}(U_\alpha)},\\
\end{split}
\ee
and therefore 
$$(d-\iota_v-u\iota_K+\Theta+u^{-1}\pi^*H)ch_{\nabla^\xi:\mathcal{G}}(\nabla^E, \nabla^{E'})|_{\pi^{-1}(U_\alpha)}=0. $$

$\, $

(ii) Let
$$\nabla^E_t=(1-t)\nabla^E_0+t\nabla^E_1, \ \nabla^{E'}_t=(1-t)\nabla^{E'}_0+t\nabla^{E'}_1$$
and $F^E_t, F^{E'}_t, \mu^E_t, \mu^{E'}_t$ be the corresponding curvatures and momentums.

Let $$A^{E_\alpha}=\nabla_1^{E_\alpha}-\nabla_0^{E_\alpha},\   A^{{E'}_\alpha}=\nabla_1^{{E'}_\alpha}-\nabla_0^{{E'}_\alpha}. $$ 

We have
$$\phi_{\alpha\beta}^{-1}(-u^{-1}(F_t^{E_\alpha}+\mu_{v+uK, t}^{E_\alpha})-u^{-1}\pi^*B_\alpha-2\pi i \theta_\alpha) \phi_{\alpha\beta}=-u^{-1}(F_t^{E_\beta}+\mu_{v+uK, t}^{E_\beta})-u^{-1}\pi^*B_\beta-2\pi i \theta_\beta$$
and
$$\phi_{\alpha\beta}^{-1}(-u^{-1}A^{E_\alpha})\phi_{\alpha\beta} =-u^{-1}A^{E_\beta}.$$
Similar equalities hold for $E'$.

Therefore we have 
\be \label{trans}
\begin{split}
&\exp(-u^{-1}\pi^*B_\alpha-2\pi i \theta_\alpha)\\
&\cdot \int_0^1\Tr\left(-u^{-1}A^{E_\alpha}\exp(-u^{-1}(F_t^{E_\alpha}+\mu_{v+uK,t}^{E_\alpha}))+u^{-1}A^{E'_\alpha}\exp(-u^{-1}(F_t^{E'_\alpha}+\mu_{v+uK,t}^{E'_\alpha})) \right)dt
\end{split}
\ee
can be glued together as a global 
differential form in $\Omega^*(S\xi)[[u, u^{-1}]]$. Denote this form by $cs(\nabla_0^E, \nabla_0^{E'}; \nabla_1^E, \nabla_1^{E'})$. Since $\iota_v A^{E_\alpha}=0, L_v A^{E_\alpha}=0$, similar to proof of (i), we have 
$$\iota_vcs(\nabla_0^E, \nabla_0^{E'}; \nabla_1^E, \nabla_1^{E'})=0, \ L_vcs(\nabla_0^E, \nabla_0^{E'}; \nabla_1^E, \nabla_1^{E'})=-cs(\nabla_0^E, \nabla_0^{E'}; \nabla_1^E, \nabla_1^{E'})$$
and therefore 
$$cs(\nabla_0^E, \nabla_0^{E'}; \nabla_1^E, \nabla_1^{E'})\in \widetilde{\Omega}^*(S\xi)[[u, u^{-1}]].$$

Moreover, by the standard Chern-Simons transgression, we have
\be 
\begin{split}
&(d-\iota_v-u\iota_K)\\
&\int_0^1\Tr\left(-u^{-1}A^{E_\alpha}\exp(-u^{-1}(F_t^{E_\alpha}+\mu_{v+uK,t}^{E_\alpha}))+u^{-1}A^{E'_\alpha}\exp(-u^{-1}(F_t^{E'_\alpha}+\mu_{v+uK,t}^{E'_\alpha})) \right)dt\\
=&\Tr\left(\exp(-u^{-1}(F_1^{E_\alpha}+\mu_{v+uK,1}^{E_\alpha})) -\exp(-u^{-1}(F_1^{E'_\alpha}+\mu_{v+uK,1}^{E'_\alpha})) \right)\\
&-\Tr\left(\exp(-u^{-1}(F_0^{E_\alpha}+\mu_{v+uK,0}^{E_\alpha})) -\exp(-u^{-1}(F_0^{E'_\alpha}+\mu_{v+uK,0}^{E'_\alpha})) \right).\\
\end{split}
\ee
Then similar to (\ref{diff}), we see that
$$  (d-\iota_v-u\iota_K+\Theta+u^{-1}\pi^*H)cs(\nabla_0^E, \nabla_0^{E'}; \nabla_1^E, \nabla_1^{E'})=ch_{\nabla^\xi:\mathcal{G}}(\nabla_1^E, \nabla_1^{E'})-ch_{\nabla^\xi:\mathcal{G}}(\nabla_0^E, \nabla_0^{E'}).$$
\end{proof}

This theorem shows that $ch_{\nabla^\xi:\mathcal{G}}(\nabla^E, \nabla^{E'})$ is $(d-\iota_v-u\iota_K+\Theta+u^{-1}\pi^*H)$-closed in $\ometil^*(S\xi)^{\TT}[[u, u^{-1}]]$. Theorem \ref{fundtheorem} then tells us that $f^{-1}\left(ch_{\nabla^\xi:\mathcal{G}}(\nabla^E, \nabla^{E'})\right)$
 is $(\nabla^\xi-u\iota_K+u^{-1}H)$-closed in $\Omega^*(M, \xi)^{\TT}[[u, u^{-1}]]$.

We call  $$ CS(\nabla_0^E, \nabla_0^{E'}; \nabla_1^E, \nabla_1^{E'}):=f^{-1}\left(cs(\nabla_0^E, \nabla_0^{E'}; \nabla_1^E, \nabla_1^{E'}) \right)\in \Omega^*(M, \xi)[[u, u^{-1}]]$$
the {\bf exotic twisted equivariant Chern-Simons transgression term}. By (\ref{trans}) and Theorem \ref{fundtheorem} (formula (\ref{comm})), one has 
\be Ch_{\nabla^\xi:\mathcal{G}}(\nabla_1^E, \nabla_1^{E'})-Ch_{\nabla^\xi:\mathcal{G}}(\nabla_0^E, \nabla_0^{E'})=(\nabla^\xi-u\iota_K+u^{-1}H)CS(\nabla_0^E, \nabla_0^{E'}; \nabla_1^E, \nabla_1^{E'}). \ee

We define the
{\bf exotic twisted equivariant Chern character} to be:
$$Ch_{\nabla^\xi:\mathcal{G}}:K_{\TT}^0(M, \nabla^\xi:\mathcal{G})\to h_{\TT}^{*}(M, \nabla^\xi:H),$$
$$Ch_{\nabla^\xi:\mathcal{G}}(E, E')=\left[f^{-1}\left(ch_{\nabla^\xi:\mathcal{G}}(\nabla^E, \nabla^{E'})\right)\right].$$

\begin{remark}
A natural question is whether the exotic twisted equivariant Chern character is a rational isomorphism. 
However in the (untwisted) equivariant case, the equivariant Chern character 
$$
Ch^G: K^j_G(M) \to H_G^j(M) 
$$
where $H_G^j(M) $ is the even equivariant cohomology for $j=0$ and the odd equivariant cohomology for $j=1$, is {\em not} a rational isomorphism, as $H_G^j(M) = K^j_G(M) \otimes_{R(G)} R^\infty(G)$.
These results are due to Block \cite{Block} and Brylinski \cite{Bry87}. We do not explore this further in our context as it is not central to our investigations.
\bigskip
\end{remark}

\begin{remark} \label{commute1} Assume that the system $\{M, \mathcal{G}, (\xi, \nabla^\xi)\}$ is trivial, i.e., the $\TT$-action is trivial and the line bundle $\xi$ is trivial with trivial connection $\nabla^\xi=d$. Then a $(\TT\times U(1))$-equivariant gerbe module with horizontal connection for the gerbe $\{\hat L_{\alpha\beta}\}$ on $S\xi=M\times S^1$ in Definition \ref{maindef} can be identified with a gerbe module with connection for the gerbe $\mathcal{G}$ on $M$.  Therefore for the trivial system $\{M, \mathcal{G}, (\xi, \nabla^\xi)\}$, we have
\be K_{\TT}^0(M, \nabla^\xi:\mathcal{G})\cong K^0(M, \mathcal{G}).\ee\newline
\end{remark}

\begin{remark} \label{commute} Let us apply the constructions of exotic twisted equivariant K theory and exotic twisted equivariant Chern character to the concrete system $\{LZ, \bar H, (\cL^B, \nabla^{\cL^B})\}$ is Example 1. Let $i: Z\to LZ$ be the embedding.  One sees that when restricting to the fixed point submanifold $Z$ in $LZ$, the $\TT$-action as well as the honolomy line bundle $\cL^B$ become trivial and $i^*\bar H=H$. By Remark \ref{commute1}, 
\be K_{\TT}^0(Z, \nabla^{i^*\cL^B}: H)\cong K^0(Z, H). \ee
On the other hand, the triviality of the $\TT$-action and the line bundle on $Z$ imply that the moments $\mu_{v+uK}$'s in (\ref{glue1}) all disappear and all the $\theta_\alpha$'s are the same, denote it by $\theta$. Clearly $\exp(-2\pi i\theta)\otimes s=\gamma^{-1}$, where $s$ is the global identity section of the trivial circle bundle $Z\times S^1$. Then in view of (\ref{def-f}), we see that when restricted to $Z$, the exotic twisted equivariant Chern character degenerates to the usual twisted Chern character. This shows us that the diagram (\ref{twistedBC_H2}) is commutative. 
\end{remark}

\begin{remark} \label{noncommute} In Diagram (\ref{twistedBC_H2}), let $0\ne \Phi \in K_{\TT}^0(LZ, \nabla^{\cL^B}:\mathcal{G}) $ be in the kernel of $res$, that is $res(\Phi)=0 \in K^0(Z, H)$. 
Then $BCh_H(res(\Phi))=0 $, but $Ch_{\nabla^{\cL^B}:\mathcal{G}}(\Phi)\ne 0$.
\end{remark}

$\, $

\subsection{The odd case: gerbe modules}\label{odd k-thy}

Let $\cG= \{(H, B_\alpha, A_{\alpha\beta})\}$ be a gerbe with connection on $M$ as above.  Let $E=\{E_\alpha\}$ be a $U_{\gI}$ gerbe module with module connection $\nabla^E=\{\nabla^{E_\alpha}\}$.  Let $\phi=\{\phi_\alpha: E_\alpha\to E_\alpha\}$ be an automorphism of the gerbe module $E$ that respects the $U_{\gI}$ gerbe module structure, that is, $\phi_\alpha \in U_{\gI}(E_\alpha) = \{I + A_\alpha \in U(E_\alpha), A_\alpha\,\, \text{a trace class operator}\}$. We also need a compatibility condition on overlaps, 
$\psi_{\alpha \beta}\circ \phi_\alpha  \circ \psi_{\alpha \beta}^{-1} = \phi_\beta$. Here $\psi_{\alpha \beta}: L_{\alpha\beta}\otimes E_\beta \cong E_\alpha$, satisfying associativity by the gerbe condition. \bigskip

Then odd twisted K-theory $K^1(M, \cG)$ is the abelian group generated by such pairs $(E, \phi)$ with relations,
\begin{enumerate}
\item If $0\to E_1\to E_2 \to E_3\to 0$ is an exact sequence of gerbe modules such that the following diagram commutes, 
\begin{equation}
\begin{gathered}
\xymatrix{
0\ar[r] & E_1 \ar[r]\ar[d]^{\phi_1}\ar[r]& E_2\ar[r]\ar[d]^{\phi_2} & E_3 \ar[d]^{\phi_3}\ar[r] & 0\\
0\ar[r] &E_1 \ar[r]\ar[r]& E_2 \ar[r]  & E_3  \ar[r] & 0.
}
\end{gathered}
\end{equation}
 then one has $$(E_2, \phi_2) = (E_1, \phi_1) + (E_3, \phi_3).$$

\item $$(E, \phi_1\circ \phi_2) = (E, \phi_1) + (E, \phi_2).$$
\end{enumerate}
\bigskip

Then $\phi^{-1}\nabla^E\phi$ is another module connection for $E$. As explained in \cite{MS}, 
\be (\phi_\alpha^{-1}F^{E_\alpha}\phi_\alpha+B_\alpha )^k- (F^{E_\alpha}+B_\alpha )^k \ee
are differential forms with values in the trace class endomorphisms of $E_\alpha$ and 
\be \Tr[(\phi_\alpha^{-1}F^{E_\alpha}\phi_\alpha+B_\alpha )^k- (F^{E_\alpha}+B_\alpha )^k] \ee
patch together to be an even degree differential form on $M$. Denote it by $\Tr[(\phi^{-1}F^{E}\phi+B )^k- (F^{E}+B )^k].$

Let $\nabla^E(s)=s\phi^{-1}\nabla^E\phi+(1-s)\nabla^E$ be a path joining $\phi^{-1}\nabla^E\phi$ and $\nabla^E$. Let 
$$A(\phi)(s)=\partial_s\nabla^E(s)=\phi^{-1}\nabla^E\phi-\nabla^E,$$ which satisfies 
\be A(\phi)_\alpha(s)=\psi^{-1}_{\alpha\beta}A(\phi)_\beta(s) \psi_{\alpha\beta}. \ee

Following \cite{MS}, one defines the odd Chern character form
\be Ch_H(\nabla^E, \phi)=-\exp{(-B)}\int_0^1ds \Tr[A(\phi)\exp(-F^{E}(s))]. \ee

\subsection{The odd case: exotic twisted equivariant $K^1$-theory}

Let $M$ be a good $\TT$-manifold with an $\TT$-invariant cover $\{U_\alpha\}$. Let $\xi\to M$ be a $\TT$-equivariant Hermitian line bundle over $M$ equipped with a $\TT$-invariant Hermitian connection $\nabla^\xi$. Let $\mathcal{G}=(\{U_\alpha\}, H, B_\alpha, A_{\alpha\beta})$ be a weak $\TT$-invariant gerbe on $M$. Assume that $(\xi, \nabla^\xi)$ and $(\{U_\alpha\}, H, B_\alpha, A_{\alpha\beta})$ are coupled on $M$. 

Associated to the system $\{M, \mathcal{G}, (\xi, \nabla^\xi)\}$, we will introduce a version of twisted $K^1$-theory and odd twisted Chern character in this section. Adopt the same notations as in Section \ref{extevenK}.

\begin{definition} The pair $(E, \phi)$ with $E=\{E_{\alpha}, \nabla^{E_\alpha}\}$ a $U_{\gI}$ gerbe module  and $\phi=\{\phi_\alpha: E_\alpha\to E_\alpha\}$ a automorphism of the gerbe module respecting the $U_{\gI}$ structure, is said to be
a $(\TT\times U(1))$-equivariant {\bf odd gerbe module with horizontal connection} for the gerbe $\{\hat L_{\alpha\beta}\}$ if \newline
(a) the $(\TT\times U(1))$-invariant connections $\nabla^{E_\alpha}$'s vanish on the vertical direction, i.e. $\nabla^{E_\alpha}_v\equiv 0$;\newline
(b) there are  $(\TT\times U(1))$-equivariant  isomorphisms
$$ 
\psi_{\alpha\beta}: \hat L_{\alpha\beta} \otimes E_\beta \cong E_\alpha,
$$
which respect the connections. 
\end{definition}
Note that the isomorphisms $\{\psi_{\alpha\beta}\}$ are consistently defined on
triple overlaps because of the type  (\ref{gerbeproperty})  property of the gerbe  $\{(\hat L_{\alpha\beta}, \nabla^{\hat L_{\alpha\beta}}=d+\pi^*A_{\alpha\beta})\}$. \newline

One defines the {\bf exotic twisted $\TT$-equivariant $K^1$-theory} of $\{M, \mathcal{G}, (\xi, \nabla^\xi)\}, \mathcal{G})$, denoted as $K_{\TT}^1(M, \nabla^\xi:\mathcal{G})$, by
\be K_{\TT}^1(M, \nabla^\xi:\mathcal{G}):=\{(E, \phi)\}/\{\sim\}, \ee
where the equivalence relation $\sim$ is analogous to the description in \S\ref{odd k-thy}.

Similar to (\ref{trans}), the forms
\be \label{littleoddChern}
\exp(-u^{-1}\pi^*B_\alpha-2\pi i \theta_\alpha)\cdot \int_0^1ds\Tr\left(-u^{-1}A(\phi)_\alpha(s)\exp(-u^{-1}(F_s^{E_\alpha}+\mu_{v+uK,s}^{E_\alpha})) \right)
\ee
can be glued together as a global 
differential form in $\Omega^*(S\xi)[[u, u^{-1}]]$. Simply denote this form by 
\be ch_{\nabla^\xi:\mathcal{G}}(\nabla^E, \phi)=\exp(-u^{-1}\pi^*B-2\pi i \theta) \int_0^1ds\Tr\left(-u^{-1}A(\phi)(s)\exp(-u^{-1}(F_s^{E}+\mu_{v+uK,s}^{E})) \right)
.\ee
Then similar to the proof of Theorem \ref{evenChernMain}, one can prove that
\be \iota_v ch_{\nabla^\xi:\mathcal{G}}(\nabla^E, \phi)=0, \ \ \ L_vch_{\nabla^\xi:\mathcal{G}}(\nabla^E, \phi)=-ch_{\nabla^\xi:\mathcal{G}}(\nabla^E, \phi),\ee
and 
 \be (d-\iota_v-u\iota_K+\Theta+u^{-1}\pi^*H)ch_{\nabla^\xi:\mathcal{G}}(\nabla^E, \phi)=0.\ee
Therefore $ch_{\nabla^\xi:\mathcal{G}}(\nabla^E, \phi)$ is $(d-\iota_v-u\iota_K+\Theta+u^{-1}\pi^*H)$-closed in $\ometil^*(S\xi)^{\TT}[[u, u^{-1}]]$. Theorem \ref{fundtheorem} then tells us that $f^{-1}\left(ch_{\nabla^\xi:\mathcal{G}}(\nabla^E, \phi)\right)$
 is $(\nabla^\xi-u\iota_K+u^{-1}H)$-closed in $\Omega^*(M, \xi)^{\TT}[[u, u^{-1}]]$. Denote $$Ch_{\nabla^\xi:\mathcal{G}}(E, \phi)=\left[f^{-1}\left(ch_{\nabla^\xi:\mathcal{G}}(\nabla^E, \phi)\right)\right]\in h_{\TT}^{*}(M, \nabla^\xi:H).$$
 
Similar to Proposition 5.1 in \cite{MS}, one can show that $Ch_{\nabla^\xi:\mathcal{G}}(E, \phi)$ is independent of the choice of module horizontal connection $\nabla^E$ on $E$ and choice of automorphism $\phi$ of $E$. We define the {\bf exotic twisted equivariant odd Chern character} to be
$$Ch_{\nabla^\xi:\mathcal{G}}:K_{\TT}^1(M, \nabla^\xi:\mathcal{G})\to h_{\TT}^{*}(M, \nabla^\xi:H).$$


\end{document}